\documentclass[12pt]{article}

\usepackage{amsmath,amssymb,mathrsfs,wasysym}
\usepackage{amsthm}
\usepackage{enumerate}
\usepackage{graphicx}
\pagestyle{plain}
\date{}
\newtheorem{theorem}{Theorem}
\newtheorem{prop}{Proposition}
\newtheorem{lemma}{Lemma}
\newtheorem{rem}{Remark}
\newtheorem{exmp}{Example}

\begin{document}
\author{\bf Edyta Bartnicka, Andrzej Matra{\'s}}

  \title{Tops of graphs of non-degenerate linear codes}
\maketitle
\begin{abstract}
Let $\Gamma_k(V)$ be the Grassmann graph whose vertex set ${\mathcal G}_{k}(V)$ is formed by all $k$-dimensional subspaces of an $n$-dimensional vector space $V$ over the finite field $F_q$ consisting of $q$ elements. We discuss its subgraph $\Gamma(n,k)_q$  with the vertex set ${\mathcal C}(n,k)_q$ consisting of all non-degenerate linear $[n, k]_q$ codes.
We study maximal cliques $\langle U]^{c}_{k}$ of $\Gamma(n,k)_q$, which are intersections of tops of $\Gamma_k(V)$ with ${\mathcal C}(n,k)_q$. We show when they are contained in a line of ${\mathcal G}_{k}(V)$ and then we  prove that $\langle U]^{c}_{k}$ is a maximal clique of $\Gamma(n,k)_q$ when it is not contained in a line of ${\mathcal G}_{k}(V)$. Furthermore, we show that the automorphism group of the set of such maximal cliques is isomorphic with the automorphism group of $\Gamma(n,k+1)_{q}$.

\end{abstract}

\paragraph{\rm\bf Keywords}
Linear code, Grassmann graph, Maximal cliques of Grassmann graph, Tops of Grassmann graph

\paragraph{\rm\bf Mathematics Subject Classification:}
51E22, 94B27

\section{Introduction}
\label{}
We start from   Grassmann graphs, which are important for many reasons, especially they are classical examples of distance regular graphs \cite{brouwer}.
They
have  been discussed in a number of books and papers (see, e.g.
\cite{Pankov2010, Pankov2015}).

The Grassmann graph  $\Gamma_k(V)$ whose vertex set ${\mathcal G}_{k}(V)$ is formed by\linebreak $k$-dimensional subspaces of an $n$-dimensional vector space $V$ over the $q$-element field $F_q$ can be considered as the graph of all linear  codes $[n, k]_q$. Following the language of coding theory, two distinct codes are connected by an edge if they have the maximal possible number of common codewords. Since the Grassmann graph is complete for $k=1$ and $k=n-1$, we deal with the case  $1<k<n-1$.

This paper focuses on the restriction of the Grassmann graph to the set ${\mathcal C}(n,k)_q$ of all  linear codes $[n,k]_{q}$ whose generator matrices do not contain zero columns, i.e., every coordinate functional does not vanish on these codes.  Such codes are called non-degenerate. The subgraph of the Grassmann graph with the set of vertices ${\mathcal C}(n,k)_q$, denoted by $\Gamma(n, k)_q$,  is investigated, e.g., in \cite{KP, KP2, Pankov2023}. The research of linear codes by a reduction to the non-degenerate case is a natural approach, see \cite{TVN}.
The subgraphs of the Grassmann graph formed by various types of linear
codes are considered in \cite{CGK, CG, KP3}. Maximal cliques in the graph of  simplex codes have already been discussed in \cite{KP4}.

Maximal cliques  of $\Gamma(n,k)_q$ are intersections of maximal cliques of $\Gamma_k(V)$ with the set ${\mathcal C}(n,k)_q$ and there are two types of maximal cliques in $\Gamma_k(V)$. These are stars which consist of all $k$-dimensional subspaces of $V$ containing a fixed $(k-1)$-dimensional subspace, and tops which consist of all\linebreak $k$-dimensional subspaces of $V$ contained in a fixed $(k+1)$-dimensional subspace. The article of Kwiatkowski and Pankov \cite{KP2} completely characterize  intersections of stars of $\Gamma_k(V)$ with the set ${\mathcal C}(n,k)_q$.  Here we complete the findings of \cite{KP2} and provide the thorough description of maximal cliques of $\Gamma[n, k]_q$ which are intersections of tops of $\Gamma_k(V)$ with the set ${\mathcal C}(n,k)_q$.

The starting point of our investigation is introducing the subspace $W$ of $(k+1)$-dimensional vector space $V^{k+1}$ over the finite field $F_q$. This subspace is generated by all vectors which are not scalar multiples of columns of the generator matrix for a fixed non-degenerate linear code $[n,k+1]_{q}$.  Next, we will show that in the case of $\dim W\leqslant2$ the non-degenerate part of the top of $\Gamma_k(V)$ is contained in a line of ${\mathcal G}_{k}(V)$ (Theorem \ref{line}). Then we will prove that the non-degenerate part of the top of $\Gamma_k(V)$ is a maximal clique of $\Gamma(n, k)_q$ when it is not contained in a line of ${\mathcal G}_{k}(V)$ (Theorem\ref{top}).
 Examples of non-degenerate parts of tops of $\Gamma_k(V)$
 are also given.
Finally, we will show that the automorphism group  of the set of such maximal cliques can be identified with the automorphism group of the graph of non-degenerate linear codes $[n,k+1]_{q}$ and we will discuss the action of this group on the set of maximal cliques. The characterization of automorphism group of linear codes and the linear codes equivalence problem 
have been  investigated  by many authors (see, e.g. \cite{Bal, Ba, HH}). New algorithms for equivalence of linear codes are constantly finding (see \cite{Bo2}).

\section{Graph of non-degenerate linear codes as a subgraph of Grassmann graph}
Let  $V$ be an  $n$-dimensional vector space  over the finite field $F_q$ consisting of $q$ elements and let ${\mathcal G}_{k}(V)$ be the Grassmannian consisting of all\linebreak $k$-dimensional subspaces of $V$. Two such subspaces are called {\it adjacent} if their intersection is $(k-1)$-dimensional.
The {\it Grassmann graph} $\Gamma_{k}(V)$ is the simple and connected graph whose vertex set is ${\mathcal G}_{k}(V)$. Its vertices are joined by an edge if, and
only if, they are adjacent.
In the cases when $k=1,\ k=n-1$, the Grassmann graph is complete.

Let $S\subset U\subset V$ and $\dim S<k<\dim U$. The set $K$ of all $k$-dimensional subspaces  of $V$ such that $S\subset K\subset U$ is denoted by $[S,U]_{k}$. We write  $\langle U]_{k}$ and $[S\rangle_{k}$ in the  cases when $S=0$ or $U=V$, respectively. If $$\dim S=k-1\;\mbox{ and }\;\dim U=k+1,$$ then $[S,U]_{k}$ is called a {\it line}.

One of the basic concepts of graph theory is that of a clique. A clique in
an undirected graph is a subset of the vertices such that any two distinct
vertices from this subset comprise an edge, i.e., the subgraph induced by these vertices
is complete.  A clique   is said to be {\it maximal} if it is not
properly contained in any clique.
From now on we suppose that $1<k<n-1$, then any  maximal clique of $\Gamma_{k}(V)$ is one of the following forms:
\begin{enumerate}
\item[$\bullet$] the {\it star}  $[S\rangle_{k}$ consisting of all $k$-dimensional subspaces containing a fixed $(k-1)$-dimensional subspace $S$, 
\item[$\bullet$] the {\it top}  $\langle U]_{k}$ consisting of all $k$-dimensional subspaces contained in a fixed $(k+1)$-dimensional subspace $U$. 
\end{enumerate}

Let us put $V=\underbrace{F_{q}\times\dots \times F_{q}}_{n}$. Recall that $V$ contains precisely
$$\genfrac{[}{]}{0pt}{}{n}{k}_q=
\frac{(q^n-1)(q^{n-1}-1)\cdots(q^{n-k+1}-1)}{(q-1)(q^2-1)\cdots(q^k-1)}$$
$k$-dimensional subspaces. Hence the number of $1$-dimensional subspaces of $V$, which is also the number of hyperplanes in $V$, is equal
$$\genfrac{[}{]}{0pt}{}{n}{1}_q=\genfrac{[}{]}{0pt}{}{n}{n-1}_q=
[n]_{q}=\frac{q^n-1}{q-1}.$$
Furthermore, there are exactly $[n-k]_q$ $(k+1)$-dimensional subspaces of $V$ containing a fixed $k$-dimensional subspace of $V$.

\noindent
The number of elements of $\langle U]_{k}$ is equal $[k+1]_{q}$.

The intersection of two distinct maximal cliques of the  form $\langle U]_{k}$ is either empty or it contains exactly one vertex.
The latter case goes ahead precisely when the corresponding  $(k+1)$-dimensional subspaces are adjacent.
The intersection of $[S\rangle_{k}$ and $\langle U]_{k}$ is non-empty exactly if
the corresponding\linebreak $(k-1)$-dimensional and $(k+1)$-dimensional subspaces are incident. This non-empty intersection is called a line of ${\mathcal G}_k(V)$ and any line of ${\mathcal G}_{k}(V)$ contains precisely $q+1$ elements.

The standard basis of $V$ consists of the vectors
$$e_{1}=(1,0,\dots,0),\dots,e_{n}=(0,\dots,0,1).$$
Let $C_i$ stands for the coordinate hyperplane of V spanned by all the vectors $e_j$ with $j\neq i$, i.e., the kernel of the $i$-th coordinate functional $(v_{1},\dots,v_{n})\to v_{i}$.

A {\it linear code} $[n,k]_{q}$ is an element of ${\mathcal G}_{k}(V)$. We consider only the case of {\it non-degenerate} linear codes. These are linear codes not contained in any coordinate hyperplane $C_{i}$. One can equivalently say, that $[n,k]_{q}$ is non-degenerate if
the restriction of every coordinate functional to $[n,k]_{q}$ is non-zero, i.e., its generator matrix does not contain zero columns.
${\mathcal C}(n,k)_q$ stands for the set of all non-degenerate linear codes $[n,k]_{q}$ and is equal to\linebreak ${\mathcal G}_{k}(V)\setminus \left(\bigcup_{i=1}^{n}{\mathcal G}_{k}(C_{i}) \right).$

Write $\Gamma(n, k)_q$  for the restriction of the  Grassmann graph to the set of all non-degenerate linear codes. This is the simple and connected graph \cite[Proposition 1]{KP} whose set of vertices is $C(n,k)_{q}$.
So, any clique of $\Gamma(n,k)_{q}$ is a  clique of $\Gamma_{k}(V)$ and  any maximal clique of $\Gamma(n,k)_{q}$ is
 the intersection of $\Gamma(n,k)_{q}$ with a maximal clique of $\Gamma_{k}(V)$.  We denote $[S\rangle_{k}\cap{\mathcal C}(n,k)_q$ and $\langle U]_{k}\cap{\mathcal C}(n,k)_q$ by $[S\rangle^{c}_{k}$ and $\langle U]^{c}_{k}$, respectively.
They both
can be empty or  non-maximal cliques of $\Gamma(n,k)_{q}$.

In the last section will be needed the concept of monomial semilinear automorphism of $V$. Recall that a bijection $f:V\rightarrow V$ is a {\it semilinear automorphism} if there exists a field automorphism $\sigma:F_q\rightarrow F_q$ such that for all $x, y\in V$ and $a\in F_q$ it holds that
\begin{enumerate}
  \item $f(x+y)=f(x)+f(y),$
  \item $f(ax)=\sigma(a)f(x).$
\end{enumerate}
Let $\delta$ be a permutation on the set $\{1, 2, \dots, n\}$. A semilinear automorphism $f$ of $V$ is {\it monomial} if $f(e_i)=e_{\delta_i}$. Two linear codes $[n,k]_{q}$ are called {\it semilinearly equivalent} if there exists a monomial semilinear automorphism of $V$ between them.

\section{Tops of graphs of non-degenerate linear\linebreak codes}

In this article, we investigate sets $\langle U]^{c}_{k}$. To be more precise, we are interested in those sets $\langle U]^{c}_{k}$ which are maximal cliques of $\Gamma(n,k)_{q}$ and we call them tops of $\Gamma(n,k)_{q}$.
It is  known that  any maximal clique of $\Gamma(n,k)_{q}$ is not contained in a line of ${\mathcal G}_{k}(V)$.
It is obvious that if a $(k+1)$-dimensional subspace $U$ of $V$ does not belong to ${\mathcal C}(n,k+1)_q$, then $\langle U]^{c}_{k}$ is empty. That is why throughout this paper we shall only study $U\in {\mathcal C}(n,k+1)_q$. A generator matrix of $U$, i.e., a matrix whose rows $v_{1},\dots,v_{k+1}$  form a basis for $U$, will be denoted by $M$.

The following lemma will be important in our consideration. For better understanding we prove it newly in a simpler way.
\begin{lemma}{\rm \cite{KP2}}
\label{lem.prop}
Intersections $U\cap C_{i}$ and $U\cap C_{j}$ coincide if, and only if, the $i$-th column of $M$ is a scalar multiple of the $j$-th column of $M$.
\end{lemma}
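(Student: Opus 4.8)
The plan is to work directly with the generator matrix $M$ of $U$, whose columns we denote $m_1,\dots,m_n \in F_q^{\,k+1}$. Since $U$ is non-degenerate, none of these columns is the zero vector, so each $m_i$ spans a $1$-dimensional subspace of $U^{*}\cong F_q^{\,k+1}$ (thinking of columns as coordinate functionals restricted to $U$). The key observation is that $U\cap C_i$ is precisely the kernel inside $U$ of the $i$-th coordinate functional, and this functional, expressed in the chosen basis $v_1,\dots,v_{k+1}$ of $U$, is exactly the linear form $x\mapsto m_i^{\top}x$ on $F_q^{\,k+1}$. So $U\cap C_i = \ker(m_i^{\top})$ as a hyperplane of $U$.

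First I would establish the easy direction: if $m_j = \lambda m_i$ for some nonzero $\lambda\in F_q$, then the functionals $m_i^{\top}$ and $m_j^{\top}$ have the same kernel, hence $U\cap C_i = U\cap C_j$. For the converse, suppose $U\cap C_i = U\cap C_j$ as subspaces of $U$. Since $m_i\neq 0$, the form $m_i^{\top}$ is a nonzero linear functional on the $(k+1)$-dimensional space $U$, so its kernel is a hyperplane, i.e. has dimension $k$; the same holds for $m_j^{\top}$. Two nonzero linear functionals on a vector space with the same kernel (a hyperplane) must be scalar multiples of one another — this is the standard fact that the annihilator of a hyperplane is a $1$-dimensional subspace of the dual. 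Applying it here gives $m_j^{\top} = \lambda m_i^{\top}$ for some $\lambda\in F_q^{\times}$, and reading off coefficients in the dual basis yields $m_j = \lambda m_i$, i.e. the $j$-th column is a scalar multiple of the $i$-th column.

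The main thing to be careful about is the identification of the $i$-th coordinate functional of $V$, restricted to $U$, with the linear form on $F_q^{\,k+1}$ given by the $i$-th column of $M$: writing a generic element of $U$ as $\sum_{t=1}^{k+1} x_t v_t$, its $i$-th coordinate in $V$ is $\sum_t x_t (v_t)_i = \sum_t x_t M_{t,i} = m_i^{\top}x$, which is exactly the claim. Once this dictionary is in place the statement is just the hyperplane–annihilator fact, so there is no serious obstacle; the only point worth stating explicitly is that non-degeneracy of $U$ is what guarantees each $m_i\neq 0$, hence that each $U\cap C_i$ is a genuine hyperplane of $U$ rather than all of $U$, which is what makes the converse direction work.
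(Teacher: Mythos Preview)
Your proof is correct and follows essentially the same approach as the paper: both identify $U\cap C_i$ with the kernel of the linear functional on $F_q^{\,k+1}$ determined by the $i$-th column of $M$, and then argue that two nonzero functionals share a kernel if and only if they are proportional. The paper phrases this last step as a rank computation (the $2\times(k+1)$ matrix of the two columns has rank $1$), whereas you invoke the annihilator of a hyperplane being $1$-dimensional; these are the same fact in different language.
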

\begin{proof}
Let $v_{1},\dots,v_{k+1}$ be the rows of the matrix $M$ with $v_l=(v_l^{1},\dots,v_l^{n})$ for $l=1, 2, \dots, k+1$.
Then $i$-th and $j$-th coordinates of any linear combination of vectors $v_l$ are of the form $\sum_{l=1}^{k+1}a_lv_l^i$ or $\sum_{l=1}^{k+1}a_lv_l^j$, respectively.\linebreak
$U\cap C_{i}$ coincides with $U\cap C_{j}$ if, and only if, sets of solutions  of equations $\sum_{l=1}^{k+1}a_lv_l^i=0$ and $\sum_{l=1}^{k+1}a_lv_l^j=0$ (with unknowns $a_l, \ l=1, 2, \dots, k+1$) are equal.
The fact that the dimension of the space of solutions of the linear system $$\begin{cases}
\sum_{l=1}^{k+1}a_lv_l^i=0\\\sum_{l=1}^{k+1}a_lv_l^j=0\end{cases}$$ is equal $k$ implies $$rank\left[
\begin{array}{cccc}
v_1^i&v_2^i&\dots&v_{k+1}^i\\
v_1^j&v_2^j&\dots&v_{k+1}^j\\
\end{array}
\right]=1,$$
which completes the proof.
\end{proof}

Lemma \ref{lem.prop} suggests introducing
the subset $W'$ of $V^{k+1}$ formed by all non-zero vectors $w=(w_{1},\dots,w_{k+1})$ whose coordinates are not proportional to coordinates of any column of $M$. The  subspace of $V^{k+1}$ generated by $W'$ is denoted by $W$. As it was done in \cite{KP2}  we define the subspace $C(w)$ of $U$ as the set of all
vectors $\sum^{k+1}_{i=1}a_{i}v_{i}$ with scalars $a_{1},\dots, a_{k+1}\in F_q$ satisfying the equality
$$\sum^{k+1}_{i=1}a_{i}w_{i}=0.$$
\begin{exmp}
  This example concerns the case when $\dim W=1$. Namely, let $$ M=
\left[\begin{array}{cccccccccccc}
1&0&0&1&1&0&1&1&1&0&1&1\\
0&1&0&1&0&1&1&2&0&1&1&2\\
0&0&1&0&1&1&1&0&2&2&2&1\\
\end{array}
\right]$$ generates $U\in{\mathcal C}(12,3)_3$.
Coordinates of columns of $M$ are mutually disproportional, hence $U\cap C_i\neq U\cap C_j$ for $i\neq j$, as follows from Lemma \ref{lem.prop}. Therefore, there are $12$ distinct $2$-dimensional subspaces $U\cap C_i$. On the other hand, there are precisely $[3]_3=13$ distinct $2$-dimensional subspace of $U$. Thus, there exists exactly one non-degenerate linear code $[12, 2]_3\subset U$. Its generating matrix  is of the form $$\left[\begin{array}{cccccccccccc}
1&1&0&2&1&1&2&0&1&1&2&0\\
1&0&1&1&2&1&2&1&0&2&0&2\\
\end{array}
\right].$$


\end{exmp}

The statement of the undermentioned lemma comes from \cite{KP2}. To be more precise, it was concluded by the proof of \cite[Lemma 3]{KP2}. Here we give a constructive proof of this lemma which will be needful  in further studies.
\begin{lemma}\label{ukc=cw}\cite{KP2}
 The set $\langle U]^{c}_{k}$ consists of all $C(w)$.
\end{lemma}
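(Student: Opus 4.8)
The plan is to realise $\langle U]_{k}$ as the set of hyperplanes of $U$ via a duality, and then to read off which of these hyperplanes are non-degenerate. First I would fix the basis $v_{1},\dots,v_{k+1}$ of $U$ formed by the rows of $M$ and use it to identify $U$ with $F_q^{k+1}$ through $(a_{1},\dots,a_{k+1})\mapsto\sum_{i}a_{i}v_{i}$. Under this identification, for a non-zero $w=(w_{1},\dots,w_{k+1})$ the subspace $C(w)$ is exactly the kernel of the non-zero linear functional $a\mapsto\sum_{i}a_{i}w_{i}$, so $\dim C(w)=k$ and $C(w)\in\langle U]_{k}$. Conversely every $k$-dimensional subspace of $U$ is the kernel of some non-zero functional, hence equals $C(w)$ for some non-zero $w$; and by the standard correspondence between hyperplanes and linear forms, $C(w)=C(w')$ if and only if $w$ and $w'$ are proportional. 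Thus $w\mapsto C(w)$ induces a bijection from the $1$-dimensional subspaces of $F_q^{k+1}$ onto $\langle U]_{k}$.

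Next I would identify the degenerate members of $\langle U]_{k}$. Since $U\in{\mathcal C}(n,k+1)_q$, the $i$-th coordinate functional restricted to $U$ is non-zero, so $U\cap C_{i}$ is $k$-dimensional; writing $c_{i}=(v_{1}^{i},\dots,v_{k+1}^{i})$ for the $i$-th column of $M$, which is non-zero because $M$ has no zero column, one checks directly that $\sum_{l}a_{l}v_{l}\in C_{i}$ precisely when $\sum_{l}a_{l}v_{l}^{i}=0$, i.e. $U\cap C_{i}=C(c_{i})$. Now if $C(w)$ is degenerate, then $C(w)\subseteq C_{i}$ for some $i$; combined with $C(w)\subseteq U$ this gives $C(w)\subseteq U\cap C_{i}$, and a dimension count forces $C(w)=U\cap C_{i}=C(c_{i})$. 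By the bijection of the first step, this happens exactly when $w$ is proportional to the column $c_{i}$. Hence $C(w)$ is non-degenerate if and only if $w$ is not proportional to any column of $M$, that is, if and only if $w\in W'$.

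Combining the two steps gives the statement: $\langle U]^{c}_{k}=\langle U]_{k}\cap{\mathcal C}(n,k)_q$, and since $\langle U]_{k}$ is exactly the collection of all $C(w)$ with $w$ non-zero, its intersection with ${\mathcal C}(n,k)_q$ is exactly the collection of all $C(w)$ with $w\in W'$. I do not expect a genuine obstacle here; the only points needing care are that $w\mapsto C(w)$ is well defined on projective points and surjective onto $\langle U]_{k}$ (pure linear duality, which incidentally re-proves $|\langle U]_{k}|=[k+1]_q$), and the dimension count turning the inclusion $C(w)\subseteq C_{i}$ into the equality $C(w)=U\cap C_{i}$. The remainder is bookkeeping with the basis $v_{1},\dots,v_{k+1}$.
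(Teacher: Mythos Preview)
Your proof is correct and follows the same logical skeleton as the paper's: establish a bijection between $1$-dimensional subspaces $\langle w\rangle$ of $F_q^{k+1}$ and hyperplanes $C(w)$ of $U$, then identify the degenerate hyperplanes as precisely the $C(c_i)$ for $c_i$ a column of $M$. The difference is one of style. You invoke linear duality abstractly (kernel of a non-zero functional has codimension one; hyperplanes correspond to projective points of the dual), whereas the paper argues constructively: it writes down an explicit basis $\{v_l-(w_i)^{-1}w_lv_i:l\neq i\}$ for $C(w)$, and for non-degeneracy it sets up the $2\times(k+1)$ system and counts solution dimensions directly. Your route is cleaner and shorter; the paper's buys an explicit generating set for each $C(w)$, which it actually exploits in the worked examples that follow (Examples~2 and~3 display $C(w_i)$ via exactly these vectors). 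So both are fine, but if you want to reproduce the later computations you will need to extract the explicit basis at some point anyway.
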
\begin{proof}
Let $w$ be an element of $W'$. Since $w$ is a non-zero vector, so  there exists $i\in \{1, \dots, k+1\}$ such  that
$$a_i
=-(w_i)^{-1}\sum_{l\in \{1,2, \dots, k+1\}\backslash\{i\}}a_lw_l.$$
Hence $$\sum^{k+1}_{i=1}a_{i}v_{i}=
\sum_{l\in \{1,2, \dots, k+1\}\backslash\{i\}}a_l\left(v_l-(w_i)^{-1}w_lv_i\right).$$
Consequently, the set of vectors $\left\{v_l-(w_i)^{-1}w_lv_i;\ l\in \{1,2, \dots, k+1\}\backslash\{i\}\right\}$ spans $C(w)$ and it is linearly independent, so $\dim C(w)=k$.

\noindent
Additionally, $C(w)$ is not contained in any coordinate hyperplane $C_i$ for all $w\in W'$, otherwise we would obtain the following system of equations for some $w\in W'$:
$$\begin{cases}
\sum_{l=1}^{k+1}a_lv_l^i=0\\\sum_{l=1}^{k+1}a_lw_l=0\end{cases}$$ with unknowns $a_1, \dots, a_{k+1}$. The dimension of the space of its solutions is equal $(k+1)-2=k-1$ which contradicts $\dim C(w)=k$. We thus get $C(w)\in \langle U]^{c}_{k}$ for all $w\in W'$.

\noindent
Furthermore, any vector of $k$-dimensional subspace of $U$ is of the form\linebreak $\sum^{k+1}_{i=1}a_{i}v_{i}$, where $a_{1},\dots, a_{k+1}$ are elements of $F_q$ and there exists a vector $w=(w_1, \dots, w_{k+1})$ such that $\sum^{k+1}_{i=1}a_{i}w_{i}=0$ for any $a=(a_{1},\dots, a_{k+1})$.
Suppose now that there exist $b\in F_q$ and $s\in\{1, \dots, n\}$ such that $bw$ is the $s$-th column of $M$, i.e., $w=b^{-1}(v_1^s, v_2^s, \dots, v_{k+1}^s)$. This yields
$\sum_{i=1}^{k+1}a_iv_i^s=0$, what means that the considered  $k$-dimensional subspace of $U$
 is contained in the coordinate hyperplane $C_s$. Therefore, any $k$-dimensional subspace of $U$ not contained in any coordinate hyperplane $C_i$ is one of the  subspaces $C(w)$  defined above. In fine, $\langle U]^{c}_{k}$ consists of all $C(w)$, where  $w\in W'$.\end{proof}
\begin{prop}\label{dimCw}
 Let $W\neq \emptyset$ and let $W^\perp$ be the orthogonal complement of the subspace $W$ of $V^{k+1}$. Then
   $$\dim W^\perp=\dim\bigcap_{w\in W'}C(w).$$
\end{prop}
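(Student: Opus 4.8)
The plan is to transfer the whole computation from the code $U$ into the ``coefficient space'' $F_q^{k+1}$ by means of the basis $v_1,\dots,v_{k+1}$ consisting of the rows of $M$; under this transfer every $C(w)$ becomes the orthogonal complement of a line, and the statement collapses to an elementary identity about orthogonal complements.

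First I would introduce the linear map $\phi\colon F_q^{k+1}\to U$ defined by $\phi(a_1,\dots,a_{k+1})=\sum_{i=1}^{k+1}a_iv_i$. Since $v_1,\dots,v_{k+1}$ is a basis of $U$, the map $\phi$ is a linear isomorphism. Writing $w^{\perp}=\{a\in F_q^{k+1}:\sum_{i}a_iw_i=0\}$ for the orthogonal complement in $F_q^{k+1}$ of the line spanned by $w$, the definition of $C(w)$ given just before Lemma~\ref{ukc=cw} reads verbatim as $C(w)=\phi(w^{\perp})$.

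Next I would use that $\phi$ is injective, so that $\phi\big(\bigcap_{j}A_j\big)=\bigcap_{j}\phi(A_j)$ for any family of subspaces $A_j$; applying this with $A_w=w^{\perp}$ for $w\in W'$ gives $\bigcap_{w\in W'}C(w)=\phi\big(\bigcap_{w\in W'}w^{\perp}\big)$. Then I would invoke the standard fact that a vector of $F_q^{k+1}$ is orthogonal to every $w\in W'$ if and only if it is orthogonal to their linear span, i.e.\ $\bigcap_{w\in W'}w^{\perp}=\big(\mathrm{span}\,W'\big)^{\perp}=W^{\perp}$ by the definition of $W$. Since an isomorphism preserves dimension, we conclude $\dim\bigcap_{w\in W'}C(w)=\dim\phi(W^{\perp})=\dim W^{\perp}$, which is the assertion.

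There is no genuine obstacle here; the only points deserving an explicit line are (i) that $\phi$ really is an isomorphism and that $C(w)=\phi(w^{\perp})$ unwinds directly from the definitions, and (ii) the identity $\phi(A\cap B)=\phi(A)\cap\phi(B)$, for which injectivity of $\phi$ is essential — without it only the inclusion ``$\subseteq$'' is automatic. The hypothesis $W\neq\emptyset$ (that is, $W'\neq\emptyset$, equivalently $W\neq\{0\}$) is used only to guarantee that the intersection on the right is taken over a non-empty index set, so that it is a proper subspace of $U$ rather than all of $U$ by the empty-intersection convention.
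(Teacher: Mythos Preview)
Your proof is correct and follows essentially the same approach as the paper: the paper simply notes that $a\in W^{\perp}$ if and only if $\sum_{i}a_iv_i\in\bigcap_{w\in W'}C(w)$, which is precisely your isomorphism $\phi$ carrying $W^{\perp}$ onto $\bigcap_{w\in W'}C(w)$. Your version spells out more explicitly the role of injectivity in commuting $\phi$ with intersections and the passage from $W'$ to its span, but the underlying idea is identical.
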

\begin{proof}
Let $a=(a_1, \dots, a_{k+1})\in W^\perp$, i.e., $\sum^{k+1}_{i=1}a_{i}w_{i}=0$ holds for all $w=(w_1, \dots, w_{k+1})\in W'$. This is equivalent to saying that $\sum^{k+1}_{i=1}a_{i}v_{i}\in \bigcap_{w\in W'}C(w)$, what yields the desired claim.
\end{proof}
   It is known that if $W^\perp$ is the orthogonal complement of the subspace $W$ of $V^{k+1}$, then
   $$\dim W+\dim W^\perp=k+1.$$
A crucial property of sets $\langle U]^{c}_{k}$ is stated in the following result:
\begin{theorem}\label{line}
The set $\langle U]^{c}_{k}$ is contained in a line of  ${\mathcal G}_{k}(V)$ if, and only if, $\dim W\leqslant2$.
\end{theorem}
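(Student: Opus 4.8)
The plan is to reduce the statement to a purely dimension-theoretic claim about the common intersection $\bigcap_{w\in W'}C(w)$, and then read off both implications from the description of $\langle U]^{c}_{k}$ in Lemma~\ref{ukc=cw}. First note that if $W'=\emptyset$ then $\langle U]^{c}_{k}=\emptyset$ by Lemma~\ref{ukc=cw}, while $W=\{0\}$; so both sides of the asserted equivalence hold trivially and we may assume $W'\neq\emptyset$. Combining Proposition~\ref{dimCw} with the identity $\dim W+\dim W^{\perp}=k+1$ yields
\[
\dim W\leqslant 2\quad\Longleftrightarrow\quad \dim\bigcap_{w\in W'}C(w)\geqslant k-1 .
\]
Thus it suffices to prove that $\langle U]^{c}_{k}$ is contained in a line of ${\mathcal G}_{k}(V)$ if and only if $\dim\bigcap_{w\in W'}C(w)\geqslant k-1$.

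For the ``if'' direction, assume $D:=\bigcap_{w\in W'}C(w)$ has dimension at least $k-1$ (note $\dim D\leqslant k$ automatically, since $D\subseteq C(w)$ and $\dim C(w)=k$) and choose a $(k-1)$-dimensional subspace $S\subseteq D$. By Lemma~\ref{ukc=cw} every element of $\langle U]^{c}_{k}$ has the form $C(w)$ with $w\in W'$, and each such $C(w)$ satisfies $S\subseteq D\subseteq C(w)\subseteq U$. Since $\dim S=k-1$ and $\dim U=k+1$, this says exactly that $\langle U]^{c}_{k}\subseteq[S,U]_{k}$, and $[S,U]_{k}$ is a line of ${\mathcal G}_{k}(V)$.

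Conversely, suppose $\langle U]^{c}_{k}\subseteq[S,U']_{k}$ for some line $[S,U']_{k}$, so in particular $\dim S=k-1$. Every vertex of $[S,U']_{k}$ contains $S$ by the definition of $[S,U']_{k}$, hence $S\subseteq C(w)$ for every $w\in W'$, and therefore $S\subseteq\bigcap_{w\in W'}C(w)$. This forces $\dim\bigcap_{w\in W'}C(w)\geqslant k-1$, completing the equivalence.

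The argument carries essentially no computational content once Proposition~\ref{dimCw} is available; the only steps needing a little care are the translation of ``contained in a line'' into the inclusion $S\subseteq\bigcap_{w\in W'}C(w)$ and the verification of the boundary situations (the case $W'=\emptyset$, and the case in which $\langle U]^{c}_{k}$ is a single vertex, which corresponds to $\dim W=1$), where one must still check that both sides of the equivalence agree. I do not anticipate a genuine obstacle here: the real work lies in Lemma~\ref{ukc=cw} and Proposition~\ref{dimCw}, which are already in hand.
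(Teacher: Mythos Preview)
Your proof is correct and follows essentially the same route as the paper's: both arguments pass through Proposition~\ref{dimCw} and the identity $\dim W+\dim W^{\perp}=k+1$ to reduce the question to whether $\dim\bigcap_{w\in W'}C(w)\geqslant k-1$, and then read off the equivalence from Lemma~\ref{ukc=cw}. Your ``conversely'' direction is in fact slightly more explicit than the paper's contrapositive step (the paper simply asserts that $\dim\bigcap_{w\in W'}C(w)<k-1$ forces $\langle U]^{c}_{k}$ out of every line), but the substance is the same.
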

\begin{proof}
$"\Rightarrow"$ \
 Assume that $\dim W>2$. Equivalently, $\dim W^\perp<k-1$, where $W^\perp$ is the orthogonal complement of the subspace $W$ of $V^{k+1}$. In  light of Proposition \ref{dimCw} $$\dim\bigcap_{w\in W'}C(w)<k-1$$ and hence $\langle U]^{c}_{k}$ is not contained in a line of ${\mathcal G}_{k}(V)$.

 $"\Leftarrow"$ \ If $\dim W=0$, $\langle U]^{c}_{k}$ is empty.  Suppose now that $\dim W=1$ or $\dim W=2$. Equivalently, $\dim W^\perp=k$ or $\dim W^\perp=k-1$, and by Proposition \ref{dimCw} $\dim\bigcap_{w\in W'}C(w)=k$ or $\dim\bigcap_{w\in W'}C(w)=k-1$. It is obvious that in both cases there exists $(k-1)$-dimensional subspace $S\subseteq\bigcap_{w\in W'}C(w)$. Thereby $\langle U]^{c}_{k}$ is contained in a line of  $[S, U]_{k}$.
\end{proof}

\begin{exmp}
  Consider the non-degenerate linear code $U\in{\mathcal C}(360,6)_3$ 
  with the set $W'$ consists of vectors
  \begin{align*}
  & w_1=(1,2,1,2,1,2),\\ & w_2=(0,1,0,1,0,1),\\ & w_3=(1,0,1,0,1,0),\\ & w_4=(1,1,1,1,1,1)\end{align*} and their scalar multiples $w_5=2w_1,\ w_6=2w_2,\ w_7=2w_3,\  w_8=2w_4$.\linebreak
 Then $\dim W=\dim\left\langle w_1,w_2,w_3,w_4,w_5,w_6,w_7,w_8\right\rangle=2$, so $\dim W^\perp=4$\linebreak  and
   $W^\perp=\left\langle (1,0, 0, 0, 2, 0),(0,1,0,0,0,2),(0,0,1,0,2,0),(0,0,0,1,0,2)\right\rangle$.\linebreak
  According to Theorem \ref{line}, the set $\langle U]^{c}_{5}$ is contained in a line of  ${\mathcal G}_{5}(V)$. We will show now that $\langle U]^{c}_{5}$ is a line of  ${\mathcal G}_{5}(V)$.

  \noindent
  Based on the proof of Proposition \ref{dimCw} and Lemma \ref{lem.prop} we get \begin{align*}
  & C(w_1)=C(w_5)=\left\langle v_1+2v_5, v_2+2v_6, v_3+2v_5, v_4+2v_6, v_1+v_2\right\rangle,\\ & C(w_2)=C(w_6)=\left\langle v_1+2v_5, v_2+2v_6, v_3+2v_5, v_4+2v_6, v_1\right\rangle,\\ & C(w_3)=C(w_7)=\left\langle v_1+2v_5, v_2+2v_6, v_3+2v_5, v_4+2v_6, v_2\right\rangle,\\ &C(w_4)=C(w_8)=\left\langle v_1+2v_5, v_2+2v_6, v_3+2v_5, v_4+2v_6, v_1+2v_2\right\rangle\end{align*} and $$\bigcap_{i=1}^4C(w_i)=\left\langle v_1+2v_5, v_2+2v_6, v_3+2v_5, v_4+2v_6\right\rangle.$$

  Therefore, $\langle U]^{c}_{5}$ is a line $\left[\bigcap_{i=1}^4C(w_i), U\right]_5$ of ${\mathcal G}_{5}(V)$.

\end{exmp}

\begin{theorem}\label{top}
  The set $\langle U]^{c}_{k}$ is a top of $\Gamma(n,k)_{q}$ if, and only if, it
  is not contained in a line of  ${\mathcal G}_{k}(V)$.
\end{theorem}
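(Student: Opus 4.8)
The plan is to establish both implications of Theorem~\ref{top}, where one direction is a tautology and the real work lies in the converse. The forward direction is immediate: by definition, a top of $\Gamma(n,k)_q$ is a set $\langle U]^c_k$ which is a maximal clique of $\Gamma(n,k)_q$, and it is already noted (just before Lemma~\ref{lem.prop}) that no maximal clique of $\Gamma(n,k)_q$ can be contained in a line of ${\mathcal G}_k(V)$, since a line has only $q+1$ elements and lies inside a star as well as a top, so it is never maximal. Hence if $\langle U]^c_k$ is a top then it is not contained in a line. So I would dispatch this in one sentence and concentrate on $"\Leftarrow"$.

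For the converse, assume $\langle U]^c_k$ is not contained in a line of ${\mathcal G}_k(V)$; by Theorem~\ref{line} this means $\dim W\geqslant 3$. First I must check that $\langle U]^c_k$ is a clique: this is automatic, since any two distinct $k$-subspaces of the fixed $(k+1)$-subspace $U$ intersect in dimension $k$, hence are adjacent, so $\langle U]_k$ is a clique in $\Gamma_k(V)$ and $\langle U]^c_k = \langle U]_k \cap {\mathcal C}(n,k)_q$ inherits this. It remains to prove maximality in $\Gamma(n,k)_q$. So suppose $N \in {\mathcal C}(n,k)_q$ is adjacent to every element of $\langle U]^c_k$; I want to force $N \subset U$, i.e. $N \in \langle U]^c_k$. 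The standard dichotomy for maximal cliques of the Grassmann graph says that a set of pairwise adjacent $k$-subspaces either all contain a common $(k-1)$-subspace (star case) or all lie in a common $(k+1)$-subspace (top case). I would apply this to $\langle U]^c_k \cup \{N\}$: if this enlarged set still lies in a common top $\langle U']_k$, then since $\langle U]^c_k$ spans $U$ (because $\dim W\geqslant 3$ forces $\langle U]^c_k$ to contain at least two subspaces whose join is all of $U$ — more carefully, the intersection of all $C(w)$ has dimension $\dim W^\perp = k+1-\dim W \leqslant k-2$, so $\langle U]^c_k$ is certainly not contained in any single line, and one deduces $\sum_{w\in W'} C(w) = U$), we get $U' = U$ and hence $N \subset U$, done. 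The dangerous case is the star alternative: $\langle U]^c_k \cup \{N\}$ could all contain a common $(k-1)$-subspace $S$.

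The main obstacle, then, is ruling out — or rather absorbing — this star possibility: I must show that if $\langle U]^c_k$ together with an outside vertex $N$ all pass through a common $(k-1)$-space $S$, a contradiction arises with $\dim W \geqslant 3$. The key observation is that every $C(w) \in \langle U]^c_k$ would then contain $S$, so $S \subseteq \bigcap_{w\in W'} C(w)$, which by Proposition~\ref{dimCw} has dimension $\dim W^\perp = k+1-\dim W \leqslant k-2 < k-1 = \dim S$ — an immediate contradiction. Thus the star alternative is impossible as soon as $\dim W \geqslant 3$, the enlarged set must fall in the top case, and the argument of the previous paragraph applies. Finally I should record that $\langle U]^c_k$ is \emph{strictly} larger than any line (so that "maximal clique'' is not vacuously a line again) — but this is exactly the content of $\dim W \geqslant 3$ via Theorem~\ref{line}. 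Assembling these pieces: clique $+$ the star case excluded by dimension count $+$ the top case pinned down by $\sum_w C(w) = U$ yields that $\langle U]^c_k$ is a maximal clique, i.e. a top of $\Gamma(n,k)_q$. I would also double-check the degenerate subtlety that $N$ is required to be non-degenerate, which is what lets us speak of $\langle U]^c_k \cup \{N\}$ inside ${\mathcal C}(n,k)_q$, but this plays no role beyond making the statement well-posed.
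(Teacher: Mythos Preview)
Your proof is correct and follows essentially the same route as the paper. Both arguments invoke Theorem~\ref{line} to obtain $\dim W\geqslant 3$, then use Proposition~\ref{dimCw} to get $\dim\bigcap_{w\in W'}C(w)\leqslant k-2$ and thereby exclude the star alternative, and finally use the fact that two (or more) distinct $C(w)$'s already span $U$ to pin down the top alternative; the only cosmetic difference is that the paper phrases maximality as ``$\langle U]^{c}_{k}$ is not contained in any star $[S\rangle^{c}_{k}$ and not properly contained in any top $\langle U']^{c}_{k}$,'' whereas you adjoin an arbitrary adjacent vertex $N$ and run the star/top dichotomy on the enlarged clique.
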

\begin{proof}
  $"\Rightarrow"$\ This is known (see \cite{KP2}).

  \noindent
  $"\Leftarrow"$\ Suppose that $\langle U]^{c}_{k}$
  is not contained in a line of  ${\mathcal G}_{k}(V)$. In view of Theorem \ref{line}, this is equivalent to saying that $\dim W>2$. Another way to state this is to say $\dim W^\perp<k-1$ and by Proposition \ref{dimCw} $\dim\bigcap_{w\in W'}C(w)<k-1$, what means that $\langle U]^{c}_{k}$ is not contained in any star of $\Gamma(n,k)_{q}$.
  Furthermore, the inequality $\dim W>2$ leads to $|\langle U]^{c}_{k}|>2$ (see Lemma \ref{ukc=cw}). Thus $\langle U]^{c}_{k}$ is not properly contained in any top of  $\Gamma(n,k)_{q}$. Therefore, $\langle U]^{c}_{k}$ is a top of $\Gamma(n,k)_{q}$.
\end{proof}
\begin{exmp}
  Choose the non-degenerate linear code $U\in{\mathcal C}(58,6)_2$  with the set $W'$ consists of vectors
  \begin{align*}
  & w_1=(1,0,1,0,1,0),\\ & w_2=(0,1,0,1,0,1),\\ & w_3=(1,1,1,0,0,0),\\ & w_4=(1,1,1,1,1,1), \\ & w_5=(0,1,0,0,1,0).\end{align*} Then $\dim W=\dim\left\langle w_1,w_2,w_3,w_4,w_5\right\rangle=3$. According to Theorem \ref{line} $\langle U]^{c}_{5}$ is not contained in any line of  ${\mathcal G}_{5}(V)$. By Theorem \ref{top} we get immediately that the set $\langle U]^{c}_{5}$ is a top of $\Gamma(58,5)_{2}$. Clearly,  $$W^\perp=\left\langle (1,0,1,0,0,0),(0,1,1,0,1,1),(0,0,0,1,0,1)\right\rangle,$$
   and then by Proposition \ref{dimCw} $\dim W^\perp=\dim \bigcap_{i=1}^5C(w_i)=3.$
   Moreover, based on the proof of Proposition \ref{dimCw} we get that \begin{align*}
  & C(w_1)=\left\langle v_1+v_3, v_2+v_3+v_5+v_6,  v_4+v_6, v_2, v_4\right\rangle,\\ & C(w_2)=\left\langle v_1+v_3, v_2+v_3+v_5+v_6,  v_4+v_6, v_2+v_4, v_5\right\rangle,\\ & C(w_3)=\left\langle v_1+v_3, v_2+v_3+v_5+v_6,  v_4+v_6, v_4, v_5\right\rangle,\\ &C(w_4)=\left\langle v_1+v_3, v_2+v_3+v_5+v_6,  v_4+v_6, v_2+v_4, v_4+v_5\right\rangle\\ &C(w_5)=\left\langle v_1+v_3, v_2+v_3+v_5+v_6,  v_4+v_6, v_4, v_2+v_5\right\rangle\end{align*}
 make up the top $\langle U]^{c}_{5}$ of $\Gamma(58,5)_{2}$.
\end{exmp}
Notice that \cite[Corollary 1]{KP2} is now easy consequence of our results. The number of all scalar multiples of two linearly independent vectors of $W$ is $q+1$, so the inequality $[k+1]_q>n+q+1$ means that there are more then two linearly independent vectors in $W'$. According to Theorems \ref{line} and \ref{top}, this is equivalent to saying that,  $\langle U]^{c}_{k}$ is a top of $\Gamma(n,k)_{q}$.

By the above consideration we classify below  non-maximal cliques $\langle U]^{c}_{k}$ according to a number of lines of ${\mathcal G}_{k}(V)$ containing them.
\begin{prop}
  There are exactly three types of non-maximal cliques $\langle U]^{c}_{k}$ for any $U\in {\mathcal C}(n,k+1)_q$. These are:
  \begin{enumerate}
    \item \label{1} $\langle U]^{c}_{k}=\emptyset$;
    \item \label{2} The set $\langle U]^{c}_{k}$ contained in precisely $[k]_q[n-k]_q$ lines of  ${\mathcal G}_{k}(V)$;
    \item \label{3} The set $\langle U]^{c}_{k}$ contained in precisely one line of  ${\mathcal G}_{k}(V)$ that is $$[\bigcap_{w\in W'}C(w), U]_k.$$
  \end{enumerate}
\end{prop}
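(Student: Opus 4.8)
The plan is to derive the classification directly from Theorems \ref{line} and \ref{top}, so that the three types are simply the possible values of $\dim W$. First I would recall that, by Theorem \ref{top}, $\langle U]^{c}_{k}$ is a non-maximal clique precisely when it is contained in a line of ${\mathcal G}_{k}(V)$, and, by Theorem \ref{line}, that this happens exactly when $\dim W\leqslant 2$. Hence the non-maximal sets $\langle U]^{c}_{k}$ fall into the three mutually exclusive classes $\dim W=0$, $\dim W=1$, $\dim W=2$, and it remains to match each class with one of the listed items.

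The case $\dim W=0$ is immediate: then $W'=\emptyset$, since a single non-zero vector in $W'$ would already force $\dim W\geqslant 1$, so Lemma \ref{ukc=cw} gives $\langle U]^{c}_{k}=\emptyset$, which is item \ref{1}. For $\dim W=1$ I would use $\dim W+\dim W^{\perp}=k+1$ to obtain $\dim W^{\perp}=k$, and then Proposition \ref{dimCw} to get $\dim\bigcap_{w\in W'}C(w)=k$; since each $C(w)$ is $k$-dimensional by Lemma \ref{ukc=cw}, all of them coincide with one $k$-dimensional subspace $C$, so $\langle U]^{c}_{k}=\{C\}$ is a single vertex. It then remains to count the lines of ${\mathcal G}_{k}(V)$ through $C$: a line $[S',U']_{k}$ contains $C$ if and only if $S'\subset C\subset U'$, and since a line is recovered from its point set as the pair consisting of the intersection and the span of its members, distinct admissible pairs $(S',U')$ give distinct lines; there are $[k]_{q}$ hyperplanes $S'$ of $C$ and $[n-k]_{q}$ subspaces $U'$ of dimension $k+1$ containing $C$, hence exactly $[k]_{q}[n-k]_{q}$ lines, which is item \ref{2}.

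Finally, for $\dim W=2$ one has $\dim W^{\perp}=k-1$, so by Proposition \ref{dimCw} the subspace $S:=\bigcap_{w\in W'}C(w)$ is $(k-1)$-dimensional; from $S\subseteq C(w)\subseteq U$ and $\dim C(w)=k$ we get $C(w)\in[S,U]_{k}$ for every $w\in W'$, hence $\langle U]^{c}_{k}\subseteq[S,U]_{k}$. To see that this is the only line containing $\langle U]^{c}_{k}$, I would first note $|\langle U]^{c}_{k}|\geqslant 2$ (the map $w\mapsto C(w)$ depends only on the direction of $w$ and is injective on directions since the $v_{i}$ form a basis of $U$, so a singleton $\langle U]^{c}_{k}$ would force $W'$ to span a line, contradicting $\dim W=2$), and then observe that any line $[S',U']_{k}\supseteq\langle U]^{c}_{k}$ satisfies $S'\subseteq C(w)$ for all $w\in W'$, so $S'\subseteq S$, while $U'$ contains the span of two distinct members of $\langle U]^{c}_{k}$, which equals $U$; comparing dimensions forces $(S',U')=(S,U)$. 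This is item \ref{3}. To finish, I would remark that the three classes are genuinely distinct, being separated by the value of $\dim W$ (equivalently, by whether $\langle U]^{c}_{k}$ has $0$, $1$, or at least $2$ elements, using $[k]_{q}[n-k]_{q}>1$ since $1<k<n-1$), and that by the opening reduction they exhaust the non-maximal sets $\langle U]^{c}_{k}$. I expect the only mildly delicate points to be the line count in the case $\dim W=1$ (checking that the correspondence with pairs $(S',U')$ is a bijection) and the uniqueness claim in the case $\dim W=2$; both amount to elementary linear algebra, using that a line of ${\mathcal G}_{k}(V)$ is determined by, and determines, the intersection and the span of its members.
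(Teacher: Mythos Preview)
Your proposal is correct and follows essentially the same approach as the paper: reduce via Theorems \ref{line} and \ref{top} to the trichotomy $\dim W\in\{0,1,2\}$ and then handle each case with Lemma \ref{ukc=cw} and Proposition \ref{dimCw}. Your write-up is in fact more careful than the paper's in two places---you justify the bijection between lines through a single vertex and pairs $(S',U')$ in the case $\dim W=1$, and in the case $\dim W=2$ you explicitly argue $|\langle U]^{c}_{k}|\geqslant 2$ and recover $(S',U')=(S,U)$ from intersection/span, whereas the paper asserts the corresponding uniqueness statements more briefly.
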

\begin{proof}
According to Theorems \ref{line} and \ref{top}, we see immediately that $\langle U]^{c}_{k}$ is a non-maximal clique exactly if $\dim \langle W\rangle\leqslant 2$. Suppose first that $\dim \langle W\rangle=0$. Equivalently, $\langle U]^{c}_{k}=\emptyset$ and is contained in every line of ${\mathcal G}_{k}(V)$. Let now $\dim \langle W\rangle=1$, i.e., there exists exactly one subspace $C(w)\in \langle U]^{c}_{k}$. Of course, $C(w)$ contains $[k]_q$ $(k-1)$-dimensional subspaces $S$ and there exist  $[n-k]_q$ $(k+1)$-dimensional subspace $U'$ of $V$ containing $C(w)$, thus $\langle U]^{c}_{k}$ is contained in exactly $[k]_q[n-k]_q$ lines $[S, U']_k$. The last case is $\dim \langle W\rangle=2$. By Proposition \ref{dimCw} this is equivalent to saying that  $\dim \langle \bigcap_{w\in W'}C(w)\rangle=k-1$ and $\bigcap_{w\in W'}C(w)$ is the only $(k-1)$-dimensional subspace of $V$  contained in any element of $\langle U]^{c}_{k}$. Furthermore, $\dim \langle W\rangle=2$ indicates $\langle U]^{c}_{k}\subseteq\langle U']^{c}_{k}$ if, and only if, $U=U'$. Consequently, $[\bigcap_{w\in W'}C(w), U]_k$ is the only line of ${\mathcal G}_{k}(V)$ containing $\langle U]^{c}_{k}$.
\end{proof}

\section{Automorphisms of the set of tops}

The last section focuses on automorphisms of the set $U(n, k)_q$ of tops $\langle U]^{c}_{k}$, where $k\in\{2, \dots, n-2\}$ is fixed. The set of all automorphisms  of  $U(n, k)_q$ forms a group which is denoted by $Aut \left(U(n, k)_q\right)$.
Since tops $\langle U]^{c}_{k}$ are uniquely determined by $k+1$-dimensional subspaces $U\in{\mathcal C}(n,k+1)_q$ and automorphisms of ${\mathcal C}(n,k+1)_q$ transform tops into tops, every automorphism of the set  $U(n, k)_q$ is an automorphism of ${\mathcal C}(n,k+1)_q$. They map non-degenerate linear codes to non-degenerate linear codes and so we get the following lemma.
\begin{lemma}\label{aut}
The automorphism group $Aut \left(U(n, k)_q\right)$ of the set of tops $\langle U]^{c}_{k}$, where $k\in\{2, \dots, n-2\}$ is fixed, can be identified with the automorphism group $Aut \left(\Gamma(n,k+1)_{q}\right)$  of the graph of non-degenerate linear  codes $[n, k+1]_q$.
\end{lemma}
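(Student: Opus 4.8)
The plan is to construct mutually inverse group homomorphisms between $Aut(U(n,k)_q)$ and $Aut(\Gamma(n,k+1)_{q})$, using the assignment $U\mapsto\langle U]^{c}_{k}$ as a dictionary. First I would record that a top of $\Gamma(n,k)_{q}$ remembers its defining $(k+1)$-dimensional code: by Theorem \ref{top} such a top has more than two vertices, so by Lemma \ref{ukc=cw} it contains two distinct $k$-dimensional subspaces $C(w),C(w')$ of $U$, whose sum has dimension $k+1$ and therefore equals $U$. Thus $U=\sum_{K\in\langle U]^{c}_{k}}K$ is recovered from the top, and $U\mapsto\langle U]^{c}_{k}$ is a bijection $\beta$ from the set $\mathcal D$ of those $U\in{\mathcal C}(n,k+1)_q$ for which $\langle U]^{c}_{k}$ is a top (equivalently $\dim W>2$, by Theorems \ref{line} and \ref{top}) onto $U(n,k)_q$.

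For the forward direction I would use the known description of $Aut(\Gamma(n,k+1)_{q})$: every such automorphism is induced by a monomial semilinear automorphism $f$ of $V$. The same $f$ acts on ${\mathcal G}_{k}(V)$; it permutes the coordinate hyperplanes $C_{1},\dots,C_{n}$, hence preserves ${\mathcal C}(n,k)_q$ and yields an automorphism of $\Gamma(n,k)_{q}$; being a collineation of ${\mathcal G}_{k}(V)$ it sends lines to lines, so by Theorem \ref{top} it sends tops of $\Gamma(n,k)_{q}$ to tops. Therefore $f$ permutes $U(n,k)_q$, this permutation is the restriction of an automorphism of $\Gamma(n,k)_{q}$, and since $f(\langle U]^{c}_{k})=\langle f(U)]^{c}_{k}$ the resulting map $\iota\colon Aut(\Gamma(n,k+1)_{q})\to Aut(U(n,k)_q)$ is a homomorphism. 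It is injective because a top determines its $U$: an automorphism fixing every top fixes every $U\in\mathcal D$, and $\mathcal D$ is large enough to force the underlying $f$ to be scalar.

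The substantial part is the converse. Given $\varphi\in Aut(U(n,k)_q)$, I would transport it along $\beta$ to a bijection $\psi=\beta^{-1}\varphi\beta$ of $\mathcal D\subseteq{\mathcal C}(n,k+1)_q$ and show $\psi$ is the restriction of a unique automorphism of $\Gamma(n,k+1)_{q}$. The bridge is that intersection patterns of tops encode adjacency of codes: for distinct $U,U'\in\mathcal D$ the tops $\langle U]^{c}_{k}$ and $\langle U']^{c}_{k}$ meet, then in exactly one vertex, precisely when $U\cap U'$ is a non-degenerate $k$-dimensional subspace, and $\varphi$ preserves this ``meeting'' relation since it is intrinsic to the family of tops. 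From the preserved relation I would recover the adjacency of $\Gamma(n,k+1)_{q}$ and apply the Chow-type rigidity available for this graph to extend $\psi$ to a graph automorphism $g$; then $\rho\colon\varphi\mapsto g$ is a homomorphism, and $\iota$ and $\rho$ are mutually inverse, which gives the identification.

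The hard part will be exactly that dictionary mismatch. Adjacent codes $U\sim U'$ may have $U\cap U'$ degenerate, so their tops are disjoint; hence the ``meeting'' relation on $U(n,k)_q$ is only a sub-relation of adjacency on ${\mathcal C}(n,k+1)_q$, and $\beta$ moreover carries no information about codes outside $\mathcal D$. Dealing with this means showing the loss is harmless: either proving that adjacency in $\Gamma(n,k+1)_{q}$ is generated by ``meet in a non-degenerate $k$-subspace'', or, more robustly, reconstructing the coordinate hyperplanes $C_{i}$ directly from the family of tops and identifying $\psi$ with a monomial semilinear map, which also settles the codes not in $\mathcal D$. This is where the standing hypothesis $2\le k\le n-2$ and the counting identities of Section 2 should be used.
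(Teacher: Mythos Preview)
The paper's entire justification for this lemma is the short paragraph immediately preceding it: tops $\langle U]^{c}_{k}$ are uniquely determined by their defining codes $U\in{\mathcal C}(n,k+1)_q$, and ``automorphisms of ${\mathcal C}(n,k+1)_q$ transform tops into tops, [so] every automorphism of the set $U(n,k)_q$ is an automorphism of ${\mathcal C}(n,k+1)_q$.'' That is all. The paper never specifies what intrinsic structure on the set $U(n,k)_q$ the word ``automorphism'' refers to, never addresses the fact that only the codes $U$ with $\dim W>2$ actually yield tops (your set $\mathcal D$), and never explains how a bare permutation of tops would force a graph automorphism of $\Gamma(n,k+1)_q$. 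In effect the paper is using the bijection $U\mapsto\langle U]^{c}_{k}$ to \emph{define} $Aut(U(n,k)_q)$ as the transported copy of $Aut(\Gamma(n,k+1)_q)$, and the lemma is closer to a declaration than a theorem.

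Your proposal reads the lemma as a genuine isomorphism between two independently defined groups and tries to build mutually inverse homomorphisms. Your forward map $\iota$ (monomial semilinear automorphisms act on tops) is exactly what the paper has in mind and is correct; your observation that $U=\sum_{K\in\langle U]^{c}_{k}}K$ recovers $U$ from its top is the rigorous version of the paper's phrase ``uniquely determined.'' The backward direction, however, is where you go far beyond the paper. You correctly note that the natural relation on tops---nonempty intersection---encodes only adjacencies $U\sim U'$ with $U\cap U'$ non-degenerate, and that $\mathcal D$ may be strictly smaller than ${\mathcal C}(n,k+1)_q$; both issues are real, and neither is touched by the paper. Your suggested repair (reconstruct the coordinate hyperplanes from the family of tops and pin down a monomial semilinear map) is a plausible line of attack, but it is genuinely nontrivial and you leave it as a programme rather than an argument; the claim that ``$\mathcal D$ is large enough to force the underlying $f$ to be scalar'' likewise needs proof.

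In short: your forward direction matches the paper and is fine; your backward direction is a substantially stronger statement than anything the paper proves, and the ``hard part'' you isolate is precisely the content the paper omits. If the lemma is meant in the paper's loose, quasi-definitional sense, you have already done more than required; if it is meant as you read it, then the paper does not prove it either, and completing your sketch would be new work.
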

In the case of $k=n-2>1$ any graph $\Gamma(n,k+1)_{q}$ is complete and any bijective transformation of the vertex set of this graph is its automorphism. The order of the automorphism group is $|\Gamma(n,k+1)_{q}|!$ and its action on the set $U(n, k)_q$ has exactly one orbit.

From now on we will consider only $k\in\{2, \dots, n-3\}$.

 Any monomial semilinear automorphism of $V$ induces an automorphism of $\Gamma(n,k+1)_{q}$ and any automorphism of $\Gamma(n,k+1)_{q}$  is induced by a monomial semilinear automorphism of $V$ \cite[Theorem 2]{KP2}. Therefore, in light of Lemma \ref{aut} an orbit of an element of the set $U(n, k)_q$ under the action of its automorphism group is made  by tops determined by semilinearly equivalent non-degenerate linear codes $[n, k+1]_q$.
 If $q$ is a prime  number, then the automorphism group $Aut(F_q)$ of $F_q$ is trivial 
  and hence the automorphism group $Aut \left(U(n, k)_q\right)$ is equal to the linear automorphism group $Aut_L \left(U(n, k)_q\right)$. Otherwise, $Aut_L \left(U(n, k)_q\right)$ is the proper subgroup of $Aut \left(U(n, k)_q\right)$ and $|Aut \left(U(n, k)_q\right)|=|Aut_L \left(U(n, k)_q\right)|\cdot|Aut(F_q)|$.
\begin{prop}\label{orbit}
 Let  $k\in\{2, \dots, n-3\}$ be fixed.  An orbit of an element of the set $U(n, k)_q$ under the action of $Aut_L \left(U(n, k)_q\right)$ is made  by tops whose generator matrices consist of the same columns up to proportionality.
\end{prop}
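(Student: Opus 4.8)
The plan is to unwind the identifications of Lemma~\ref{aut} until the action becomes visible on generator matrices. By Lemma~\ref{aut}, $Aut\left(U(n,k)_q\right)$ is the same group as $Aut\left(\Gamma(n,k+1)_{q}\right)$, and by \cite[Theorem 2]{KP2} (applicable since $1<k+1<n-1$) this group consists precisely of the transformations of ${\mathcal C}(n,k+1)_q$ induced by monomial semilinear automorphisms of $V$; its subgroup $Aut_L\left(U(n,k)_q\right)$ is the one coming from the monomial \emph{linear} automorphisms of $V$, i.e.\ those whose accompanying field automorphism is the identity. Recall that such an $f$ is, in the standard basis, the composition of a permutation of the coordinates with a coordinate-wise rescaling by nonzero scalars; on row vectors it acts by $v\mapsto vN$, where $N$ is an $n\times n$ monomial matrix, that is, $N$ has exactly one nonzero entry in each row and in each column. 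Since monomial linear automorphisms only permute the coordinate hyperplanes $C_1,\dots,C_n$, they map ${\mathcal C}(n,k+1)_q$ onto itself, and since a top $\langle U]^{c}_{k}$ is determined by $U$, it suffices to describe the orbit of $U\in{\mathcal C}(n,k+1)_q$ under the group of monomial linear automorphisms of $V$.

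Next I would record the effect of such an $f$ on a generator matrix. Let $M$ be a generator matrix of $U$, with rows $v_1,\dots,v_{k+1}$ a basis of $U$, and write $N_{\delta(j),j}=d_j\neq 0$, where $\delta$ is the permutation underlying $N$. Then $f(v_1),\dots,f(v_{k+1})$ are the rows of $MN$ and form a basis of $f(U)$, so $MN$ is a generator matrix of $f(U)$. The one computation needed is that the $j$-th column of $MN$ equals $M$ times the $j$-th column of $N$, which is $d_j$ times the $\delta(j)$-th column of $M$; hence the columns of the generator matrix $MN$ of $f(U)$ are exactly the columns of $M$, rearranged and each scaled by a nonzero factor.

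For the converse I would argue that this is the only freedom available. Suppose $U'\in{\mathcal C}(n,k+1)_q$ admits a generator matrix $M'$ for which there are a permutation $\delta$ of $\{1,\dots,n\}$ and nonzero scalars $d_1,\dots,d_n$ with the $j$-th column of $M'$ equal to $d_j$ times the $\delta(j)$-th column of $M$, for every $j$. Letting $N$ be the monomial matrix with $N_{\delta(j),j}=d_j$, the previous paragraph yields $MN=M'$, so the monomial linear automorphism $v\mapsto vN$ carries $U$ onto $U'$ and $\langle U']^{c}_{k}$ lies in the orbit of $\langle U]^{c}_{k}$. Combining the two directions, the $Aut_L\left(U(n,k)_q\right)$-orbit of $\langle U]^{c}_{k}$ is exactly the set of tops $\langle U']^{c}_{k}$ such that $U'$ has a generator matrix whose columns are, up to order and up to proportionality, the columns of a generator matrix of $U$, which is the claim.

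I do not expect a genuine obstacle here: the heart of the matter is the elementary dictionary ``right multiplication by a monomial matrix $=$ permute-and-rescale columns'', combined with the already available Lemma~\ref{aut} and \cite[Theorem 2]{KP2}. The only point deserving a line of justification is that the property in the statement is well posed, i.e.\ independent of the choice of generator matrices: replacing $M$ by $GM$ with $G\in GL_{k+1}(F_q)$ multiplies every column on the left by the same $G$, and so preserves both the proportionality relations among the columns and the validity of any permute-and-rescale identification; thus ``having generator matrices with the same columns up to proportionality'' defines an equivalence relation on ${\mathcal C}(n,k+1)_q$, and the orbit description above is meaningful.
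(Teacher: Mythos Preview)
Your proposal is correct and follows essentially the same route as the paper: identify $Aut_L\left(U(n,k)_q\right)$ with the group induced by monomial linear automorphisms of $V$ via Lemma~\ref{aut} and \cite[Theorem 2]{KP2}, then observe that right multiplication of a generator matrix by a monomial matrix permutes and rescales its columns. Your write-up is in fact more complete than the paper's own proof, which only spells out the forward implication (same orbit $\Rightarrow$ same columns up to proportionality) and omits both the converse and the well-posedness remark that you include.
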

\begin{proof}
  Suppose that tops $\langle U]^{c}_{k}$, $\langle U']^{c}_{k}$ are in the same orbit. Equivalently, there exists an automorphism of the set of tops  transferring one of them to the other. By the above this automorphism is induced by a monomial linear automorphism of $V$. It can be represented by a monomial matrix $A$ of order $n$ with
entries from $F_q$. Let now  $M, M'$ be generator matrices of $\langle U]^{c}_{k}$, $\langle U']^{c}_{k}$, respectively. The result of multiplication $MA=M'$ is a matrix, whose columns are the same as columns of $M$ up to proportionality.
\end{proof}

It leads to the simple conclusion that the total number of linear automorphisms of the set $U(n, k)_q$ for $1<k<n-2$ is $n!(q-1)^n$.

\begin{rem}
Suppose that two columns in the generator  matrix $M$ of $U\in{\mathcal C}(n,k+1)_q$, where $1<k<n-2$, are equivalent if they are proportional  and denote by $k_1, k_2,\dots, k_s$ numbers of elements in each equivalence class. Then
there are exactly \begin{itemize}
                        \item $k_1!k_2! \dots k_s!$ linear automorphisms of the set $U(n, k)_q$, which are automorphisms of $M$;
                        \item $\frac{n!}{k_1!k_2!\dots k_s!}(q-1)^n$ matrices $(k+1)\times n$ without zero columns, which consists of the same columns up to proportionality.
                      \end{itemize}\end{rem}

Two matrices $M,\ M'$ generate the same $U\in{\mathcal C}(n,k+1)_q$ if there exists an invertible matrix $S$ of order $k+1$ over $F_q$  such that $SM=M'$. So all automorphisms represented by matrices $A$ which satisfy $SM=MA$ for some $S$ make up the stabilizer of $\langle U]^{c}_{k}$ in $Aut_L \left(U(n, k)_q\right)$.

\begin{exmp}
  Let $q>2$. Consider the top $\langle U]^{c}_{2}$ whose generator matrix is $$M=\left[\begin{matrix}
                                         1 & 0 & 0 & 1 & 0 \\
                                         0 & 1 & 0 & 1 & 1 \\
                                         0 & 0 & 1 & 0 & 1
                                       \end{matrix}\right].$$
The set of all matrices generating $U$ consists of elements $$SM=\left[\begin{matrix}
                                         s_{11} & s_{12} & s_{13} & s_{11}+s_{12} & s_{12}+s_{13} \\
                                         s_{21} & s_{22} & s_{23} & s_{21}+s_{22} & s_{22}+s_{23} \\
                                         s_{31} & s_{32} & s_{33} & s_{31}+s_{32} & s_{32}+s_{33}
                                       \end{matrix}\right],$$ where $S=\left[\begin{matrix}
                                         s_{11} & s_{12} & s_{13}  \\
                                         s_{21} & s_{22} & s_{23}  \\
                                         s_{31} & s_{32} & s_{33}
                                       \end{matrix}\right]$ is any invertible matrix with entries from $F_q$.
The only matrices $A$, which satisfy $SM=MA$ for some $S$ are of the form:
\begin{align*}
                                                                                                                    & \left[\begin{matrix}
                                                                                                                    a & 0 & 0 & 0 & 0 \\
                                                                                                                    0 & a & 0 & 0 & 0 \\
                                                                                                                    0 & 0 & a & 0 & 0 \\
                                                                                                                    0 & 0 & 0 & a & 0 \\
                                                                                                                    0 & 0 & 0 & 0 & a
                                                                                                                  \end{matrix}\right], \left[\begin{matrix}
                                                                                                                    a & 0 & 0 & 0 & 0 \\
                                                                                                                    0 & a & 0 & 0 & 0 \\
                                                                                                                    0 & 0 & 0 & 0 & -a \\
                                                                                                                    0 & 0 & 0 & a & 0\\
                                                                                                                    0 & 0 & -a & 0 & 0
                                                                                                                  \end{matrix}\right],
\left[\begin{matrix}
                                                                                                                    0 & 0 & a & 0 & 0 \\
                                                                                                                    0 & a & 0 & 0 & 0 \\
                                                                                                                    a & 0 & 0 & 0 & 0 \\
                                                                                                                    0 & 0 & 0 & 0 & a \\
                                                                                                                    0 & 0 & 0 & a & 0
                                                                                                                  \end{matrix}\right], \\
                                                                                                                    & \left[\begin{matrix}
                                                                                                                    0 & 0 & a & 0 & 0 \\
                                                                                                                    0 & a & 0 & 0 & 0 \\
                                                                                                                    0 & 0 & 0 & -a & 0 \\
                                                                                                                    0 & 0 & 0 & 0 & a \\
                                                                                                                    -a & 0 & 0 & 0 & 0
                                                                                                                  \end{matrix}\right],
\left[\begin{matrix}
                                                                                                                    0 & 0 & 0 & a & 0 \\
                                                                                                                    0 & -a & 0 & 0 & 0 \\
                                                                                                                    0 & 0 & -a & 0 & 0 \\
                                                                                                                    a & 0 & 0 &  & 0 \\
                                                                                                                    0 & 0 & 0 & 0 & -a
                                                                                                                  \end{matrix}\right],
 \left[\begin{matrix}
                                                                                                                    0 & 0 & 0 & a & 0 \\
                                                                                                                    0 & -a & 0 & 0 & 0 \\
                                                                                                                    0 & 0 & 0 & 0 & a \\
                                                                                                                    a & 0 & 0 & 0 & 0 \\
                                                                                                                    0 & 0 & a & 0 & 0
                                                                                                                  \end{matrix}\right], \\
                                                                                                                    & \left[\begin{matrix}
                                                                                                                    0 & 0 & 0 & 0 & a \\
                                                                                                                    0 & -a & 0 & 0 & 0 \\
                                                                                                                    -a & 0 & 0 & 0 & 0 \\
                                                                                                                    0 & 0 & a & 0 & 0 \\
                                                                                                                    0 & 0 & 0 & -a & 0
                                                                                                                  \end{matrix}\right],
 \left[\begin{matrix}
                                                                                                                    0 & 0 & 0 & 0 & a \\
                                                                                                                    0 & -a & 0 & 0 & 0 \\
                                                                                                                    0 & 0 & 0 & a & 0 \\
                                                                                                                    0 & 0 & a & 0 & 0 \\
                                                                                                                    a & 0 & 0 & 0 & 0
                                                                                                                  \end{matrix}\right],
                                                                                                                 \end{align*}
                                                                                                                 where $a\in F_q\setminus\{0\}$.
Thus the stabilizer of $\langle U]^{c}_{2}$ in $Aut_L \left(U(5, 2)_q\right)$  consists of $8(q-1)$ automorphisms represented by the above-listed matrices. Consequently,   the length of the orbit of $\langle U]^{c}_{2}$ is $15(q-1)^4$.

\end{exmp}

Any linear code can be associated with a specific binary matrix, such that their automorphism groups are isomorphic (see \cite{Bo}). Furthermore, it is also well known that any such matrix can be considered as a bipartite graph. It follows that the description of the stabilizer of $\langle U]^{c}_{k}$ in $Aut \left(U(n, k)_q\right)$ can be based on the automorphism group of respective bipartite graph.
In the case of $M$ such that elementary row operations preserve columns of $M$, the automorphism group of $U$, $Aut(U)$, is simultaneously the stabilizer of $\langle U]^{c}_{k}$ in $Aut \left(U(n, k)_q\right)$.

\begin{exmp}
Consider the linear code $U\in{\mathcal C}(n,k+1)_2$ whose generator matrix is  $$M=\left[\begin{array}{lccc|cccc|c|ccccr}
                                                                                                                    1 & 1 & &1& 0 & 0 & & 0 & &0&0& &0 \\
                                                                                                                    0 & 0 & &0& 1& 1 & &1& & 0 &0& & 0 \\
                                                                                                                    0 & 0 &\ldots&0& 0 & 0&\ldots&0&\ldots &\vdots & \vdots&\ldots&\vdots \\
                                                                                                                    \vdots&\vdots& &\vdots&\vdots&\vdots& &\vdots& &0&0& &0\\
                                                                                                                    0&0& &0&0&0& &0& &1&1& &1
                                                                                                                  \end{array}\right]$$
with $t$ identical columns in each equivalence class of proportionality.
It can be considered as the bipartite graph $G(A, B, E)$ which is depicted in Figure \ref{fig}. Elements of $A=\{r_1, \dots, r_{k+1}\}$ refer to rows of $M$, and elements of $B=\{c_{i1}, \dots, c_{it}; i=1, \dots, k+1\}$ refer to columns of $M$.
 \begin{figure}[ht]
\centering
\includegraphics[width=0.5\textwidth]{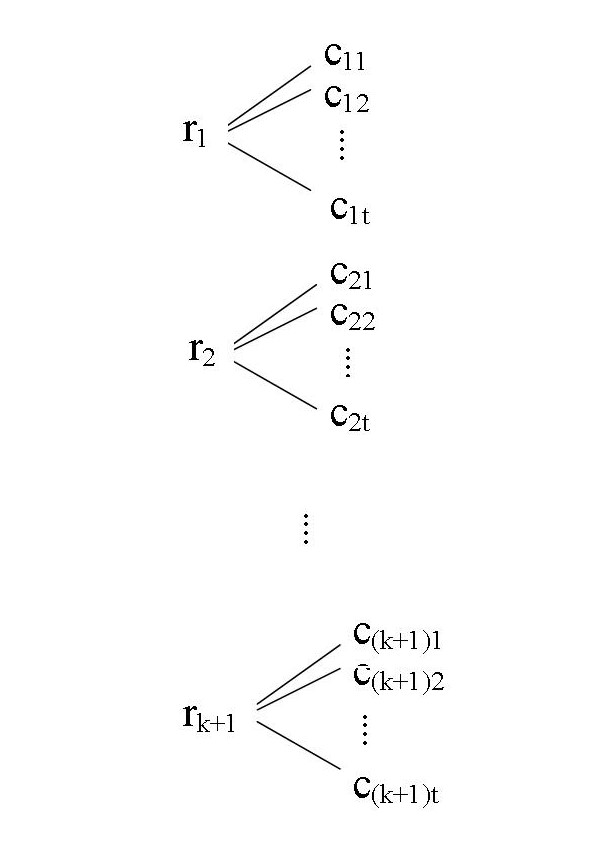}
\caption{Bipartite graph related to $U\in{\mathcal C}(n,k+1)_2$}\label{fig}
\end{figure}

\noindent
The total number of its automorphisms is $(t!)^{k+1}(k+1)!$. They form a group which is isomorphic with  $Aut(U)$ equal to the stabilizer of $\langle U]^{c}_{k}$ in $Aut \left(U(n, k)_2\right)$. $(t!)^{k+1}$ means the number of automorphisms of $M$ and $(k+1)!$ is the number of permutations of the rows of $M$ which are all elementary row operations preserving columns of $M$.
Automorphisms of $\langle U]^{c}_{k}$ are induced by automorphisms of $V$ and thus $\left|Aut \left(U(n, k)_2\right)\right|=n!$. An orbit of  $\langle U]^{c}_{k}$ under the action of $Aut\left(U(n, k)_2\right)$ consists of $\frac{n!}{(t!)^{k+1}(k+1)!}$ tops.

The set $W'$ consists of $2^{k+1}-(k+2)$ vectors with at least two non-zero coordinates.
  The group $Aut(U)$ acts on the set $\langle U]^{c}_{k}$. Moreover, the group $Aut(U)$ contains a normal subgroup $\texttt{T}\simeq \underbrace{S_t\times\dots \times S_t}_{k+1} $ which acts identically on the set $\langle U]^{c}_{k}$ and ${Aut(U)}/\texttt{T}\simeq S_{k+1}$.
\end{exmp}





\footnotesize Edyta Bartnicka\\
Institute of Information Technology,
Faculty of Applied Informatics
and Mathematics,
Warsaw University of Life Sciences - SGGW,
Nowoursynowska 166 St.,
02-787 Warsaw,
Poland\\
{\tt edyta\_bartnicka@sggw.edu.pl}

\vspace{0.3cm}

Andrzej Matra{\'s}\\
Faculty of Mathematics and Computer Science,
University of Warmia and Mazury in Olsztyn,
S{\l}oneczna 54 St.,
10-710 Olsztyn,
Poland\\
{\tt matras@uwm.edu.pl}\normalsize
\end{document}